\newtheorem{theorem}{Theorem}[section]
\newtheorem{proposition}[theorem]{Proposition}
\newtheorem{corollary}[theorem]{Corollary}
\theoremstyle{definition}
\theoremstyle{remark}
\newtheorem{remark}[theorem]{Remark}
\def\R{{\mathbb R}}
\def\cA{{\mathcal A}}
\def\cE{{\mathcal E}}
\def\cH{{\mathcal H}}
\def\cM{{\mathcal M}}
\def\cL{{\mathcal L}}
\def\TcT{({\mathbf T} (t))_{t\geq 0}}
\def\TcT0{({\mathbf T}_0 (t))_{t\geq 0}}
\def\L1{L^1 (\R_+ )}
\numberwithin{equation}{section}
\title{A Liapunov function approach to  the stabilization of  second order coupled systems}
\author{Alain Haraux\vspace{1ex}\\ 
{\normalsize Sorbonne Universit\'es, UPMC Univ Paris 06, CNRS, UMR 7598} \\
{\normalsize Laboratoire Jacques-Louis Lions}\\{\normalsize  4, place Jussieu 75005, Paris, France.}\\
{\normalsize e-mail: \texttt{haraux@ann.jussieu.fr}}
\and Mohamed Ali Jendoubi\vspace{1ex}\\ 
{\normalsize Universit\'e de Carthage}\\
{\normalsize Institut Pr\'eparatoire aux Etudes Scientifiques et Techniques}\\
{\normalsize B.P. 51,  2070 La Marsa, Tunisia} \\
{\normalsize e-mail: \texttt {ma.jendoubi@fsb.rnu.tn}}}
\begin{document}
\maketitle
\begin{abstract}

\small{In 2002, Fatiha Alabau, Piermarco Cannarsa and Vilmos Komornik investigated the extent of asymptotic stability of the null solution for weakly coupled partially damped equations of the second order in time. The main point is that the damping operator acts only on the first component and, whenever it is bounded, the coupling is not strong enough to produce an exponential decay in the energy space associated to the conservative part of the system.  As a consequence, for initial data in the energy space, the rate of decay is not exponential. Due to the nature of the result it seems at first sight impossible to obtain the asymptotic stability result by the classical Liapunov method.  Surprisingly enough, this turns out to be possible and we exhibit, under some compatibility conditions on the operators, an explicit class of Liapunov functions which allows to do 3 different things: \\

1) When the problem is reduced to a stable finite dimensional space, we recover the exponential decay by a single differential inequality and we estimate the logarithmic decrement of the solutions with worst (slowest) decay. The estimate is optimal at least for some values of the parameters. \\

2) We explain the form of the stability result obtained by the previous authors when the coupling operator is a multiple of the identity, so that the decay is not exponential.\\

3) We obtain new exponential decay results when the coupling operator is strong enough (in particular unbounded). The estimate is again sharp for some solutions.}

\vspace{6ex}
\noindent{\bf Mathematics Subject Classification 2010 (MSC2010):}
35B40, 49J15, 49J20.\vspace{6ex}

\noindent{\bf Key words:} damping, linear evolution equations,
dissipative hyperbolic equation, decay rates, exponentially decaying
solutions

\end{abstract}

\section{Introduction}

\bigskip 

In 2002, Fatiha Alabau, Piermarco Cannarsa and Vilmos Komornik  published the paper \cite{ACK02} in which they investigated the extent of asymptotic stability  of the null solution for $t>0$ for weakly coupled partially damped equations of the type  $$ u'' + A_1u + Bu' + Cv = v'' + A_2v + Cu = 0$$ where $A_1, A_2, B   \,\,\hbox{and} \,\, C$ are positive self-adjoint operators  satisfying additional conditions. The main point is that the damping operator acts only on the first component $u$ and when $A_1, A_2$ are comparable coercive unbounded operators while $B, C$ are coercive and bounded, the coupling is not strong enough to produce an exponential decay in the energy space associated to the conservative part of the system.  As a consequence, for initial data in the energy space, decay takes place in a weaker function space and the rate of decay is not exponential. Moreover, due to the nature of the result it seems impossible to obtain the asymptotic stability result by the classical Liapunov method which we now recall in a few lines.  In  \cite{Liap} (1892), Liapunov defined and investigated the dynamical stability of equilibrium solutions to differential systems of the form
 $$U'(t) = F(U(t))$$ where $F \in C^2 (\R^N)$. Given $a\in F^{-1}(0)$ he proved that $a$ is asymptotically stable (in fact exponentially stable) as soon as all the eigenvalues of the square matrix 
 $ M= DF(a)$  have {\it negative } real parts. This result is now classical and has been recalled in quite a few books, with different sorts of proofs depending on the applications the authors had in mind as well as their cultural background.  The original proof of Liapunov consisted in considering first the linearized equation $$ Y' = MY(t) \eqno(LIN)$$ for which $0$ is an exponentially stable equilibrium. Under the hypothesis on the eigenvalues, it is not difficult to see that all solutions of $(LIN)$ tend to $0$ as $t$ tends to infinity.  Then by considering a basis of $\R^N$ , it follows easily that for some $T>0$ we have $||\exp(TM)|| < 1$. Then by a classical division argument we find 
$$ \forall t\ge 0, \quad ||\exp(tM)|| \le  C e^{-\delta t}$$ for some $C\ge 1$ and $\delta>0$, thereby proving exponential stability of $0$ for the linearized equation. Is seems that at the time of Liapunov (and even much later) it was not natural to use the potential well argument for the nonlinear perturbation equation by using Duhamel's variation of constants formula. Therefore Liapunov looked for a renorming
allowing to get the same estimate with $C=1$, in which case a direct potential well argument in differential form becomes possible. The following quadratic function
$$ \Phi(z) = \int_0^\infty |\exp(sM)z|^2 ds $$ provides  a solution of the problem. Indeed for any solution $ Y(t) = \exp(tM) Y_0 $ of $(LIN)$ we have 
$$ \frac{d}{dt} \Phi(Y(t)) = \frac{d}{dt} \int_0^\infty |\exp(sM)\exp(tM) Y_0|^2 ds $$ 
$$ = \frac{d}{dt} \int_0^\infty |\exp(s+t)M)Y_0|^2 ds = \frac{d}{dt} \int_t^\infty |\exp(\tau)M)Y_0|^2 d\tau = 
- |Y(t)| ^2$$ By the equivalence of norms on the finite dimensional space $\R^N$ we see immediately that the new norm defined by $ ||z|| = \Phi(z)^{1/2}$ is a solution. Therefore in finite dimensions it is always possible to prove exponential stability by means of a renorming in which the norm $||Y(t)|| $ satisfies a differential inequality of the form
$$  \frac{d}{dt} ||Y(t)|| ^2 \le - \gamma ||Y(t)|| ^2$$  
In particular this seems to be a practical way of estimating the logarithmic decrement (or characteristic numbers by Liapunov's terminology) of solutions. However even the case of the simplest system \begin{equation} \label{ODE}u'' + \lambda u + bu' + cv = v'' + \lambda v + cu = 0\end{equation}  were $\lambda >0$, $b>0$ and $c\not=0$ can have any sign shows the difficulty of the problem. Standard manipulation gives the identity 
$$ \frac{d}{dt} (\lambda u^2 + \lambda v^2 + 2cuv + u'^2 + v'^2 ) = -2bu'^2 \le 0$$  Assuming $c^2 <\lambda ^2 $, the function 
$$ F(u,v,w,z) = \lambda u^2 + \lambda v^2 + 2cuv  + w^2 + z^2$$ is a positive definite quadratic form. Since 
$F(u,v,u',v') $ is non-increasing along the trajectories, the 4 components $(u,v,u',v')$ are bounded and we are in a good position to apply the invariance principle (cf. e.g.\cite{SD, HJ}). Indeed let $(u,v)$ be a solution for which $F(u,v,u',v') $ is constant. Then $2bu'^2 = 0$ implies $u'= 0$, hence $u$ is constant and $u''= 0$. Then by the first equation $v = -\frac{\lambda}{c}u$ is also constant. Finally since by the hypothesis  $c^2 <\lambda ^2 $ , the stationary system $ \lambda u  + cv =  \lambda v + cu = 0 $ has no non-trivial solution, we conclude that  $u= v= 0$  and therefore $(0,0,0,0)$  is asymptotically stable, implying exponential stability as recalled above. Now an interesting problem occurs:  the quadratic form $\Phi$ introduced by Liapunov cannot be computed since we do not have access to an explicit formula for the semi-group (the characteristic equation has degree 4!) We know, however, that the form can be computed on a basis of $\frac{4\times (4+1)}{2} = 10 $ monomials in $(u,v,w,z)$. The challenge is therefore to find one of the strict Liapunov functions (they form a non-empty open set in the space of coefficients) by a direct method, hoping that it will enlighten the nature of stability also in the more complicated (for instance infinite dimensional) cases. The object of the present paper is to carry out this specific program. More precisely, in section 2, we exhibit a class of strict Liapounov functions for the above scalar ODE. In sections 3 and 4, we  evaluate by two different approaches  the ``worst" characteristic number of solutions. In section 5, we generalize the construction to a class of strongly coupled second order equations with a linear damping acting on only one of the two components and section 6 is devoted to examples. Finally, in the last section 7, we recover one of the main results from \cite{ACK02} by using a weakened notion of strict Liapunov functions. This method seems to be applicable to more general situations and the approach can be used to obtain explicit estimates, at the expense of complicated but not impossible refinements of our calculations.

\section{A Liapunov function for the scalar case}
 In this section we consider the (real) scalar coupled system

\begin{equation} \label{ODE}
\left\{ \begin{array}{ll}
u''+u'+\lambda u +cv=0 & \\[2mm]
v''+\lambda v +cu=0 &  
\end{array} \right. \end{equation}
where $\lambda$ and $c$ are such that $0< \vert c\vert<\lambda$. The damping coefficient is set to $1$ for simplicity but a time scale change reduces general damping terms $bu'$ to this case. In order to shorten the formulas, let us introduce for each solution $(u,v)$ of \eqref{ODE}, its total energy
$$ \cE(u,u',v,v') = \frac12\left[   u'^2+   v'^2+\lambda(u^2+v^2)\right]+ c uv $$
Then we have for all $t\ge0$ $$ \frac{d}{dt} \cE(u,u',v,v') = -u'^2 $$ Our first main result is the foliowing

 \begin{proposition} \label{Liapscal}{For any $p>1$ fixed and for all $\varepsilon>0$ small enough the quadratic form 
  \begin{equation}\label{FtLiapunovSystDiff}
H_\varepsilon=\cE-\varepsilon vv'+ p\varepsilon uu'+\frac{(p+1)\lambda\varepsilon}{2c}(u'v-uv')
\end{equation} 
is a strict Liapunov function for \eqref{ODE}.}
\end{proposition}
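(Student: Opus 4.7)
The plan is to check the two defining properties of a strict Liapunov function: first that $H_\varepsilon$ is a positive definite quadratic form in $(u,v,u',v')$, and second that along every solution of \eqref{ODE} one has $\frac{d}{dt} H_\varepsilon \le -\gamma\,(u^2+v^2+u'^2+v'^2)$ for some $\gamma=\gamma(\varepsilon)>0$. Positive definiteness of $H_\varepsilon$ will follow by a perturbation argument: since $|c|<\lambda$, the matrix associated with the leading term $\cE$ is positive definite (this is exactly the fact used in the introduction), and the correction in $H_\varepsilon$ consists of four monomials each scaled by $\varepsilon$. Continuity of the eigenvalues of a symmetric matrix in its coefficients then guarantees positive definiteness of $H_\varepsilon$ for all $\varepsilon$ small enough.

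For the dissipation I would differentiate each piece of $H_\varepsilon$ along a trajectory, using $u''=-u'-\lambda u - cv$ and $v''=-\lambda v - cu$. A short computation yields
\begin{align*}
\tfrac{d}{dt}(vv') &= v'^2 - \lambda v^2 - cuv,\\
\tfrac{d}{dt}(uu') &= u'^2 - uu' - \lambda u^2 - cuv,\\
\tfrac{d}{dt}(u'v - uv') &= -u'v + c(u^2 - v^2).
\end{align*}
The role of the coefficient $\frac{(p+1)\lambda\varepsilon}{2c}$ in front of $(u'v-uv')$ then becomes transparent: the third identity above contains no $v'$, and assembling all four contributions cancels the $vv'$- and $uv'$-cross terms while forcing the coefficients of $u^2$ and of $v^2$ to agree. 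Collecting everything gives
\begin{equation*}
-\tfrac{d}{dt}H_\varepsilon = (1-p\varepsilon)u'^2 + \varepsilon v'^2 + \tfrac{(p-1)\varepsilon}{2}\bigl[\lambda(u^2+v^2)+2cuv\bigr] + p\varepsilon\, uu' + \tfrac{(p+1)\lambda\varepsilon}{2c}\,u'v.
\end{equation*}

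It then remains to show that this quadratic form in $(u,v,u',v')$ is positive definite for small $\varepsilon$. Each of the three diagonal blocks is individually positive: the $u'^2$ and $v'^2$ coefficients are positive, and by $|c|<\lambda$ we have $\lambda(u^2+v^2)+2cuv \ge (\lambda-|c|)(u^2+v^2)$. The only cross terms couple $u'$ with $u$ and with $v$; Young's inequality with a fixed weight gives
$$|p\varepsilon\, uu'| \le \tfrac14 u'^2 + p^2\varepsilon^2 u^2, \qquad \tfrac{(p+1)\lambda\varepsilon}{2|c|}|u'v| \le \tfrac14 u'^2 + \tfrac{(p+1)^2\lambda^2\varepsilon^2}{4c^2}v^2,$$
and so bounds the cross contribution by $\tfrac12 u'^2 + O(\varepsilon^2)(u^2+v^2)$. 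Since the helpful $(u,v)$-coefficient is of order $\varepsilon$ while the absorbed residue is of order $\varepsilon^2$, shrinking $\varepsilon$ further leaves a positive definite form and yields $-\frac{d}{dt}H_\varepsilon \ge \gamma(\varepsilon)(u^2+v^2+u'^2+v'^2)$ with $\gamma(\varepsilon)>0$.

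The main obstacle I anticipate is purely algebraic: identifying why the prefactor $\frac{(p+1)\lambda\varepsilon}{2c}$ is precisely what is needed to annihilate all $v'$-cross terms and to equalize the $u^2$ and $v^2$ coefficients, so that they merge with $2cuv$ into the positive definite form $\lambda(u^2+v^2)+2cuv$. Once this cancellation is identified, both positive definiteness of $H_\varepsilon$ and the differential inequality follow by standard perturbation and Young's inequality arguments.
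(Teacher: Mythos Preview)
Your proof is correct and follows essentially the same route as the paper: you compute the same derivative identity
\[
-\tfrac{d}{dt}H_\varepsilon=(1-p\varepsilon)u'^2+\varepsilon v'^2+\tfrac{(p-1)\varepsilon}{2}\bigl[\lambda(u^2+v^2)+2cuv\bigr]+p\varepsilon\,uu'+\tfrac{(p+1)\lambda\varepsilon}{2c}\,u'v,
\]
use $\lambda(u^2+v^2)+2cuv\ge(\lambda-|c|)(u^2+v^2)$, and then absorb the two $u'$--cross terms by Young's inequality. The only cosmetic difference is that the paper weights Young so that the residue lands on $u'^2$ (producing a large constant $K$ in front of $u'^2$), whereas you weight it so that a fixed fraction of $u'^2$ is spent and an $O(\varepsilon^2)$ residue lands on $u^2+v^2$; both choices give the conclusion for $\varepsilon$ small, and your explicit remark on the positive definiteness of $H_\varepsilon$ itself (left implicit in the paper) is a welcome addition.
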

 \begin{proof}  First of all we note that the derivative of the skew product involves $-v^2$. Indeed $$ \frac{d}{dt} (u'v-uv') = (u''v-uv'')= -v (u'+cv+\lambda u)+ u(cu + \lambda v) $$
$$ = c(u^2-v^2)-u'v $$ Then we find easily 
\begin{eqnarray*} && \frac{d}{dt} [-vv'+ p uu'+\frac{(p+1)\lambda}{2c}(u'v-uv')]\\
&=& pu'^2-v'^2 + v(cu+\lambda v) - pu (u'+cv+\lambda u) \\
 & &\quad + \frac{(p+1)\lambda}{2}(u^2-v^2)- 
\frac{(p+1)\lambda}{2c}u'v \\
&=& pu'^2-v'^2  -u'(pu + \frac{(p+1)\lambda}{2c}v)- \frac{(p-1)}{2} [\lambda(u^2+v^2) +2cuv].
\end{eqnarray*} 
The end of the proof is now nearly obvious. First we have 
$$ \lambda(u^2+v^2) +2cuv \ge (\lambda- |c|) (u^2+v^2) $$ Moreover we have for some constant $K>0$ 

$$ |u'(pu + \frac{(p+1)\lambda}{2c}v)| \le  \frac{(p-1)}{4}(\lambda- |c|) (u^2+v^2)  + Ku'^2$$  so that 
$$\frac{d}{dt} [-vv'+ p uu'+\frac{(p+1)\lambda}{2c}(u'v-uv')] \le (p+K) u'^2-v'^2$$ $$ -\frac{(p-1)}{4}(\lambda- |c|) (u^2+v^2) $$ The conclusion follows immediately.\end{proof}

 \begin{remark}{\rm The only missing term in this quadratic form is $u'v'$. This was predictable since its derivative does not seem to contain any interesting term. Moreover it is usual that the Liapunov function is a small perturbation of the energy. The term in $uu'$ seems to be mandatory since it is what we need in the uncoupled case to produce the emergence of a $-u^2$ term . The term in $ - vv' $  is added to produce  a $-v'^2$ by differentiation. It is then remarkable that a multiple of the wronskian-like  skew product $u'v-uv'$ is sufficient to produce the emergence of a $-v^2$ term and at the same time compensate the ``junk terms" coming from the other differentiated terms.}\end{remark}

   \section{On the logarithmic decrement as a function of the coefficients}
  
  The Liapunov function constructed in the previous section provides a theoretical tool to evaluate the logarithmic decrement of the semi-group generated by the scalar system  \eqref{ODE}, which can be defined as the upper bound of the set of  $\gamma>0$ for which  $ \exp({\gamma t})  ||T(t)||$  is bounded for $t\ge 0$, or equivalently as the logarithmic decrement (resp. characteristic number in the sense of Liapunov) of the most slowly decaying solutions. However, due to the large number of inequalities which we need to combine to exploit this Liapunov function, it seems difficult to get a sharp estimate of the decrement in all cases. 
  \bigskip
  
  \noindent In order to have a more precise idea of the dependance of $\gamma$ on the coefficients 
  $c,  \lambda $ it is useful to look for qualitative information based on the characteristic polynomial, even though the roots are in general impossible to compute. The characteristic polynomial P associated to \eqref{ODE} is easily computed:
  $$ P(\zeta) = (\zeta^2+\lambda)(\zeta^2+\zeta+\lambda)- c^2$$ 
  Several  remarks are in order 
   \begin{remark}{\rm  The logarithmic decrement never exceeds $\frac{1}{4}$. Indeed us denote by $ \zeta_j$ the 4 characteristic numbers (eventually counted with their multiplicity) of \eqref{ODE} and let us set $\rho_j : = - Re(\zeta_j).$ Since $ \sum _{j= 1}^4 \rho_j  = 1$ we have $ \inf_{j}\rho_i  \le \frac{1}{4}$.}\end{remark}
\begin{remark}{\rm For $|c|$ close enough to $\lambda$, the characteristic equation has some real roots. Indeed the function $$ G(\theta) = \sqrt {(\lambda + \theta^2)(\lambda -\theta + \theta^2)}$$ is continuous decreasing for small positive values of $\theta$ , and for any $c$ close enough to $\lambda$, the number $ \zeta=  - G^{-1}(c)$ is a real (negative) eigenvalue of the generator. Here we recover the fact that as $|c|$ approaches $\lambda$, the stabilization effect disappears and the logarithmic decrement tends to $0$. }\end{remark}
 \begin{remark}{\rm A number $\zeta = s+ ia$ with $s<0,\  a\in\R$ is a solution of the characteristic equation of and only if  it satisfies the two equations \begin{equation}\label{imaginary} a[ 4s^3 + 4(\lambda - a^2)s + 3 s^2 + \lambda - a^2] = 0\end{equation}  and 
  \begin{equation}\label{real} s^4 - 4a^2s^2 + 2(\lambda-a^2)s^2 + (\lambda-a^2)^2 + s(s^2+ (\lambda-a^2) -2a^2s = c^2\end{equation} If $a\not=0$, the equation \eqref{imaginary} reduces to 
\begin{equation}\label{a} a^2 = \lambda + \frac{4s^3+3s^2}{1+4s} \end{equation} }
\end{remark}
The next proposition completes remark 3.1.
\begin{proposition} The logarithmic decrement is always strictly less than $\frac{1}{4}$. On the other hand for any $\varepsilon>0$ small enough, there exists $\lambda >0$ and $c\in (0, \lambda)$ such that  the logarithmic decrement is equal to $\frac{1}{4}-\varepsilon$. 
\end{proposition}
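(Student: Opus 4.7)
My plan is to split the proposition into its two assertions and attack each via Vieta's relations applied to $P(\zeta) = \zeta^4 + \zeta^3 + 2\lambda\zeta^2 + \lambda\zeta + \lambda^2 - c^2$.

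For the strict inequality, my starting point is Remark 3.1: since the coefficient of $\zeta^3$ is $1$, the four roots $\zeta_j$ satisfy $\sum \rho_j = 1$ with $\rho_j = -\text{Re}(\zeta_j)$. If the logarithmic decrement were exactly $1/4$, then $\min_j \rho_j = 1/4$, combined with $\sum \rho_j = 1$, would force all four $\rho_j$ to equal $1/4$. Since the polynomial has real coefficients, non-real roots come in conjugate pairs, so after the substitution $u = \zeta + 1/4$ we would be able to write $P(\zeta) = (u^2 + \alpha^2)(u^2 + \beta^2)$ for some $\alpha, \beta \ge 0$. I would then expand this product back in $\zeta$ and compare coefficients. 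The key observation is that matching the coefficients of $\zeta^2$ and $\zeta^1$ yields respectively $2\lambda = 3/8 + (\alpha^2 + \beta^2)$ and $\lambda = 1/16 + (\alpha^2+\beta^2)/2$, and eliminating $\alpha^2 + \beta^2$ gives the contradiction $0 = 1/4$.

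For the second assertion, I would try to realize prescribed real parts. For small $\varepsilon > 0$, prescribe two conjugate pairs of roots $-r_1 \pm i\alpha$ and $-r_2 \pm i\beta$ with $r_1 = 1/4 - \varepsilon$, $r_2 = 1/4 + \varepsilon$ (so $r_1 + r_2 = 1/2$ matches the sum-of-roots condition and $r_1 < r_2$ guarantees the decrement equals $r_1$). Setting $A = r_1^2 + \alpha^2$ and $B = r_2^2 + \beta^2$, expanding
\begin{equation*}
(\zeta^2 + 2r_1\zeta + A)(\zeta^2 + 2r_2\zeta + B)
\end{equation*}
and identifying with $P(\zeta)$ produces three algebraic conditions on $A, B, \lambda, c$. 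Eliminating $\lambda$ between the $\zeta^2$ and $\zeta^1$ coefficients reduces to the single linear relation $B - A = -\delta$, where $\delta := \frac{1}{16\varepsilon} - \varepsilon > 0$ (for small $\varepsilon$). This leaves $B$ as a free parameter, and I would take $B := r_2^2$ (so $\beta = 0$, though any $B \ge r_2^2$ works), obtaining $A = r_2^2 + \delta$, which exceeds $r_1^2$, so $\alpha^2 \ge 0$ is automatic.

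It remains to check that the resulting $\lambda$ and $c$ satisfy $\lambda > 0$ and $0 < |c| < \lambda$. The coefficient of $\zeta^1$ gives $\lambda = B + 2r_2\delta > 0$, and the constant term gives $c^2 = \lambda^2 - AB$. A short computation yields $\lambda^2 - AB = 4\delta(\varepsilon B + r_2^2 \delta) > 0$, so $c \ne 0$, while $AB > 0$ ensures $c^2 < \lambda^2$. Thus $(\lambda, c)$ is admissible and the associated system has logarithmic decrement exactly $r_1 = \frac{1}{4} - \varepsilon$. The only mild obstacle I anticipate is the bookkeeping in the elimination step; everything else is forced once one insists on prescribing the real parts symmetrically around $-1/4$, which is the correct symmetry dictated by the Vieta constraint $\sum \rho_j = 1$.
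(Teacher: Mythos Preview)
Your proof is correct and, especially for the second assertion, takes a cleaner route than the paper's.

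For the first assertion both you and the paper reduce to the situation where every root has real part $-\tfrac14$. The paper then invokes the split of the characteristic equation into real and imaginary parts (their equations \eqref{imaginary}--\eqref{real}) to rule out nonreal roots separately, and only afterwards compares $P$ with $(\zeta+\tfrac14)^4$. Your device of writing $P(\zeta)=(u^2+\alpha^2)(u^2+\beta^2)$ with $u=\zeta+\tfrac14$ handles real and nonreal roots uniformly and produces the contradiction $0=\tfrac14$ from the $\zeta^2$ and $\zeta^1$ coefficients in a single stroke.

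For the second assertion the difference is more substantial. The paper fixes only one conjugate pair $-\tfrac14+\varepsilon\pm ia$, uses \eqref{a} and \eqref{real} to express $a^2$ and $c^2$ in terms of $\lambda$, makes the specific choice $\lambda=\tfrac{1}{16\varepsilon}$, and then needs a separate asymptotic argument (their Step~2) to show that the remaining two roots are nonreal, hence have common real part $-\tfrac14-\varepsilon$. By prescribing \emph{both} quadratic factors from the start with real parts $-r_1=-\tfrac14+\varepsilon$ and $-r_2=-\tfrac14-\varepsilon$, you collapse the problem to the single linear relation $A-B=\delta=\tfrac{1}{16\varepsilon}-\varepsilon$, followed by an exact algebraic verification that $\lambda=B+2r_2\delta>0$ and $\lambda^2-AB=4\delta(\varepsilon B+r_2^2\delta)\in(0,\lambda^2)$. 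No asymptotics are needed, and in fact your construction works for every $\varepsilon\in(0,\tfrac14)$, not merely for $\varepsilon$ sufficiently small. The paper's approach has the minor virtue of reusing the machinery of Remark~3.3; yours is shorter and fully explicit.
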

 \begin{proof} For the proof of the first assertion we reason by contradiction. Assuming that the decrement is equal to  $\frac{1}{4}$ means that   $ \inf_{j}\rho_i = \frac{1}{4}$. In particular $\rho_j \ge \frac{1}{4}$ for all and since $ \sum _{j= 1}^4 \rho_j  = 1$  this yields 
 $$ \forall j\in (1,2,3,4), \quad \rho_j = \frac{1}{4}. $$  Hence all roots are of the form $ \zeta_j = -\frac{1}{4} + ia_j$  However if  $ \zeta = -\frac{1}{4} + ia$ is a root of $P$, we must have  $a= 0$ . Indeed if $a\not= 0$, \eqref{imaginary} implies $4s^3 + (\lambda - a^2)(4s+1) + 3 s^2 = 0$ and since $4s+1= 0$ we deduce $4s^3 + 3 s^2 = 0$, contradicting $s = - \frac{1}{4}$. This means that  $ \zeta_j = -\frac{1}{4} $ for all $ j\in (1,2,3,4)$, hence $P(\zeta) = (\zeta^2+\lambda)(\zeta^2+\zeta+\lambda)- c^2= (\zeta+\frac{1}{4})^4. $ Identification of the coefficients provide an immediate contradiction, thereby proving the claim. \bigskip 
 
 For the proof of the second assertion a more technical argument is needed. First we  look for $\lambda$ and $c\in (0, \lambda)$ such that the equation $ P(\zeta) = 0$ has a solution of the form $$ -\frac{1}{4}+\varepsilon + ia, \quad  a\in \R,\ a\not = 0.$$ In this case the conjugate number $ -\frac{1}{4}+\varepsilon - ia $ is also a root, and the sum of the two remaining root equals  $ -\frac{1}{2}-2\varepsilon $, their product is also known . If these roots appear to be not real, their common real part will equal $-\frac{1}{4}-\varepsilon$ and  $ \frac{1}{4}-\varepsilon$ will be exactly equal to the logarithmic decrement. We conclude the proof in two steps. \bigskip
 
 {\bf Step 1.} We look for $\lambda$ and $c\in (0, \lambda)$. Since we want   $a\not = 0$, we have the formula
 \eqref{a} and by substituting the value of $a^2$ in \eqref{real} we obtain the remaining necessary and sufficient condition on $c$ in the form 
 $$ c^2 = s^4 -(4\lambda + 6 \frac{4s^3+3s^2}{1+4s} )s^2 +( \frac{4s^3+3s^2}{1+4s})^2 + \frac{s^2(s- 3s^2)}{1+4s}  -2(  \lambda + \frac{4s^3+3s^2}{1+4s} )^2 s $$ with $s = -\frac{1}{4}+\varepsilon $ .  A precise inspection of the terms shows that for $\varepsilon$ small , $$ c^2 = (\frac{1}{32\varepsilon})^2 + \frac{\lambda}{2} + O(\frac{1}{\varepsilon})$$ Now we can make (for instance) the choice $\lambda = \frac{1}{16\varepsilon}$, so that asymptotically, $ c\sim \frac{\lambda}{2}$. The only thing remaining to prove is that the remaining roots are not real. 
 \bigskip 
 
 {\bf Step 2.} The remaining roots are not real for $\varepsilon$ small. Indeed, these roots are the solutions of the equation 
 $$X^2 -SX+ P = 0$$  with 
 $$ S= -\frac{1}{2}-2\varepsilon, \quad P =  \frac{\lambda^2-c^2}{(\frac{1}{4}-\varepsilon)^2 + a^2 }$$ We claim that for $\varepsilon$ small enough the discriminant $S^2-4P$ is negative. Since $S$ is bounded, it is sufficient to prove that $P$ tends to $+\infty$. Now we have $ \lambda^2-c^2\sim \frac{3}{4}\lambda^2 \sim \frac{3}{1024 \varepsilon^2}$ and $(\frac{1}{4}-\varepsilon)^2 + a^2 = \lambda + O(\frac{1}{\varepsilon}) = O(\frac{1}{\varepsilon}) $. The conclusion follows immediately.

 \end{proof}

\section {Optimality  in some range of parameters}

It is interesting (and perhaps a bit surprising ) to note that the method of proof of Proposition \ref{Liapscal} gives a result very close to optimality in some range of parameters, specifically when the largest possible logarithmic decrement is almost achieved. More precisely we have, assuming for definiteness $c>0$
\begin{proposition} \label{1/4}{As $\frac{c}{\lambda}$ tends to $0$ and $\frac{c}{\lambda^{1/2}}$ tend to infinity, the logarithmic decrement (as evaluated by the method of proof of Proposition \ref{Liapscal} ) tends to the highest possible value $\frac{1}{4}$.}
\end{proposition}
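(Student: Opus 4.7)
The strategy is to revisit the estimate in the proof of Proposition~\ref{Liapscal} quantitatively: instead of merely establishing that $H_\varepsilon$ is a strict Liapunov function for some $\varepsilon>0$, I will extract the optimal decay rate $\gamma$ for which $\frac{d}{dt}H_\varepsilon + 2\gamma H_\varepsilon\le 0$, and then show that this optimal $\gamma$, obtained by the method, tends to $1/4$ under the assumed asymptotic regime.

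The key is to fix $p=3$ and $\varepsilon = 1/4$, the critical values that make the three natural Liapunov inequality constraints (one from the $u'^2$ coefficient, one from $v'^2$, one from the $(u,v)$-quadratic form) simultaneously saturate at $\gamma=1/4$. Under our hypothesis, the coefficient $\widetilde K := (p+1)\lambda/(2c) = 2\lambda/c$ is large, and the derivative identity from the proof of Proposition~\ref{Liapscal} simplifies to
\[
\frac{d}{dt}H_\varepsilon = -u'^2 + \tfrac14\bigl\{3u'^2 - v'^2 - u'(3u+\widetilde K v) - [\lambda(u^2+v^2) + 2cuv]\bigr\}.
\]
Writing $\gamma = 1/4 - \delta$ for a small $\delta>0$ to be chosen, the expression $\frac{d}{dt}H_\varepsilon + 2\gamma H_\varepsilon$ splits into a diagonal part whose $u'^2$, $v'^2$ and $(u,v)$-block coefficients are each of size $-\delta$ (the last one being bounded above by $-\delta(\lambda-c)(u^2+v^2)$ since $\lambda(u^2+v^2)+2cuv\ge(\lambda-c)(u^2+v^2)$), together with cross terms of the form $C_1 u'u + C_2 u'v + C_3 v'v + C_4 v'u$ where $|C_1|,|C_3|=O(1)$ while $|C_2|,|C_4|=O(\widetilde K)=O(\lambda/c)$.

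The main obstacle is the absorption of the two \emph{large} cross terms carrying the factor $\widetilde K$. Splitting $|\widetilde K u'v|\le \tfrac{\delta}{c_0}u'^2 + \tfrac{c_0 \widetilde K^2}{4\delta}v^2$ by Young's inequality (and symmetrically for $uv'$) produces $u^2$ and $v^2$ contributions of order $\widetilde K^2/\delta = \lambda^2/(c^2\delta)$. These must be dominated by the good diagonal term $\delta(\lambda-c)(u^2+v^2)$, which imposes $\delta^2(\lambda-c)\gtrsim \widetilde K^2 = 4\lambda^2/c^2$, i.e., using $c/\lambda\to 0$ (hence $\lambda-c\sim\lambda$), $\delta \gtrsim \sqrt\lambda/c$. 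The bounded cross terms $u'u$ and $vv'$ are harmless: their absorption generates only $O(1)$ perturbations of the $u^2, v^2$ coefficients, which are negligible compared to $\delta(\lambda-c)$ once $\lambda\to\infty$, as is automatic in this regime.

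Taking $\delta$ exactly of this order, say $\delta = C\sqrt\lambda/c$ with a large enough constant $C$, validates all Liapunov inequalities and delivers $\gamma = 1/4-\delta\to 1/4$ precisely when $c/\sqrt\lambda\to\infty$. A last verification is that $H_\varepsilon$ remains uniformly equivalent to the energy $\mathcal{E}$: the perturbation $\varepsilon\widetilde K(u'v-uv')\sim (\lambda/c)(u'v-uv')$ is dominated by $\tfrac12(u'^2+v'^2)+\tfrac{\lambda}{2}(u^2+v^2)$ as soon as $\lambda/c^2\to 0$, which is again $c/\sqrt\lambda\to\infty$, while $c/\lambda\to 0$ ensures the coupling $cuv$ does not spoil positivity of $\mathcal{E}$ itself. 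Combining these points yields the stated convergence of the logarithmic decrement to $1/4$.
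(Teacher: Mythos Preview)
Your argument is correct in substance and reaches the same conclusion as the paper, but the organization differs. The paper keeps $p$ and $\varepsilon$ as free parameters: it first aims at $H_\varepsilon' \le -\varepsilon F$ (with $F=\tfrac12[u'^2+v'^2+\lambda(u^2+v^2)]$), introduces an auxiliary absorption parameter, fixes $p$ by forcing the $(u^2+v^2)$-coefficient to equal $-\varepsilon\lambda$, and then solves for $\varepsilon$ from the $u'^2$-coefficient; only afterward does it pass to $H_\varepsilon' \le -\delta H_\varepsilon$ via the equivalence $H_\varepsilon\sim F$, showing $p\to 3$, $\varepsilon\to \tfrac14$, $\delta\to \tfrac12$. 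You instead fix $p=3$, $\varepsilon=\tfrac14$ at the outset and analyze $H_\varepsilon'+2\gamma H_\varepsilon$ directly with $\gamma=\tfrac14-\delta$. This is tidier in that the limiting values are built in and the three diagonal blocks collapse exactly to $-\delta$; the price is that the cross terms now include contributions from $2\gamma H_\varepsilon$ (notably a large $uv'$ term of order $\lambda/c$) which the paper's route never sees because it postpones the comparison with $H_\varepsilon$ to the very end. Both approaches finish by the same Young-absorption mechanism, and the governing smallness condition $\delta\gtrsim\sqrt{\lambda}/c$ emerges either way.

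One small imprecision: absorbing the bounded cross terms $C_1\,u'u$ and $C_3\,vv'$ into the $\delta$-sized $u'^2$, $v'^2$ budgets produces $O(1/\delta)$, not $O(1)$, contributions to the $u^2$, $v^2$ coefficients. This is still harmless against $\delta(\lambda-c)\sim\delta\lambda$ because, with $\delta\sim\sqrt{\lambda}/c$, one has $\delta^2\lambda\sim\lambda^2/c^2\to\infty$; so your conclusion is unaffected.
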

 \begin{proof}  We introduce 
 $$ F: = F(u,u',v,v') = \frac12\left[   u'^2+   v'^2+\lambda(u^2+v^2)\right] $$ Following the notation of Section 1, it is easy to check, assuming $p\ge 1$, that 
 $$ [ \frac{\lambda -c}{2\lambda}  - \varepsilon (\frac{p}{2{\lambda}^{1/2}}+\frac{(p+1){\lambda}^{1/2}}{4c} )]F \le H  \le [ \frac{\lambda +c}{2\lambda}  + \varepsilon (\frac{p}{2{\lambda}^{1/2}}+\frac{(p+1){\lambda}^{1/2}}{4c} )]F$$  On the other hand, starting from the formula  $$ H_\varepsilon' = -u'^2 +  \varepsilon \frac{d}{dt} [-vv'+ p uu'+\frac{(p+1)\lambda}{2c}(u'v-uv')] $$ we find  $$ H_\varepsilon' = -(1- p\varepsilon) u'^2-\varepsilon v'^2  -\varepsilon u'(pu + \frac{(p+1)\lambda}{2c}v)- \varepsilon\frac{(p-1)}{2} [\lambda(u^2+v^2) +2cuv] $$ hence 
 
 $$ H_\varepsilon' \le  -(1- p\varepsilon) u'^2-\varepsilon v'^2  -\varepsilon u'(pu + \frac{(p+1)\lambda}{2c}v)- \varepsilon\frac{(p-1)}{2} [(\lambda- c) (u^2+v^2)]  $$ In order to appraise the third term of the RHS, we introduce a constant $\gamma \in (0, 1)$ which will be later taken arbitrarily small and we write $$ |u'(pu + \frac{(p+1)\lambda}{2c}v)| \le \frac{\gamma (p-1)}{2} (\lambda- c) u^2 + \frac {{p}^2}{2\gamma (p-1)(\lambda- c)} u'^2$$ $$ +  \frac{\gamma (p-1)}{2} (\lambda- c) v^2 + \frac {({\frac{(p+1)\lambda}{2c}})^2}{2\gamma (p-1)(\lambda- c)} u'^2$$ 
 $$ \le \frac{\gamma (p-1)}{2} (\lambda- c) (u^2 +v^2) + \frac {4{p}^2c^2 + (p+1)^2 \lambda^2}{8\gamma (p-1)(\lambda- c) c^2} u'^2$$ so that we find 
  $$ H_\varepsilon' \le  - [1- \varepsilon (p+\frac {4{p}^2c^2 + (p+1)^2 \lambda^2}{8\gamma (p-1)(\lambda- c) c^2}) ] u'^2-\varepsilon v'^2  - (1-\gamma) \varepsilon\frac{(p-1)}{2} [(\lambda- c) (u^2+v^2)]  $$ In order to make the extreme right term equal to $- \varepsilon \lambda (u^2+v^2)$ we determine $p$ by the equation
  $$(1-\gamma) (p-1)(\lambda- c) = 2\lambda $$ hence 
  $$ p= 1+ \frac{2}{(1-\gamma)(1-\theta)};\quad \theta: = \frac{c}{\lambda}$$ We observe that as $\gamma$ and $\theta$ tend to $0$, $p$ will tend to $3$.  Our first goal being to achieve the inequality $H_\varepsilon'\le -\varepsilon F$, we now require 
  $$1- \varepsilon (p+\frac {4{p}^2c^2 + (p+1)^2 \lambda^2}{8\gamma (p-1)(\lambda- c) c^2}) = \varepsilon $$ hence 
  $$ \varepsilon = \frac{1}{1+ p+\frac {4{p}^2c^2 + (p+1)^2 \lambda^2}{8\gamma (p-1)(\lambda- c) c^2}}  =  \frac{1}{1+ p+\frac {1-\gamma}{16\gamma\lambda }[4{p}^2 +\frac{(p+1)^2}{\theta^2}]} $$ We observe that under the choice $\gamma : \frac{\lambda^{1/2}}{c}$ which tends to $0$ by hypothesis, $p$ stabilizes to $3$ and $\gamma \lambda $ becomes infinite. Moreover $\gamma \lambda \theta^2 = \gamma \frac {c^2}{\lambda}= \frac{c}{\lambda^{1/2}}$ also tends to infinity. Therefore the limiting value of $$ \varepsilon  =  \frac{1}{1+ p+\frac {1-\gamma}{16\gamma\lambda }[4{p}^2 +\frac{(p+1)^2}{\theta^2}]} $$ is $\frac{1}{1+3} = \frac{1}{4} $. Moreover from the inequality $H_\varepsilon'\le -\varepsilon F$, it follows that 
  $H_\varepsilon' \le -\delta H_\varepsilon$ with 
  $$ \delta = \frac{\varepsilon}{ \frac{\lambda +c}{2\lambda}  + \varepsilon (\frac{p}{2{\lambda}^{1/2}}+\frac{(p+1){\lambda}^{1/2}}{4c})} $$ which reduces asymptotically to $$\delta \sim 2\varepsilon$$ so that the limiting value of $\delta$ is $1/2$.  It is not difficult, in the range that we considered, to see that $F$ is bounded by a constant times $H$ . Thus 
  
  $$ F(t)\le C_1H_\varepsilon(t)\le C_2 e^{-\delta t} $$ since $F(t)$ measures the square of the norm of the solution, the limiting value of the logarithmic decrement of solutions is $1/4$ as claimed.
 
 \end{proof}

\section {The strongly coupled case}

In this section we generalize the scalar system in a framework which concerns finite dimensional and infinite dimensional systems as well.  Let $ A $   be a closed, self-adjoint, positive coercive operator on a separable Hilbert space $H$. with domain $D(A)$.  We denote by $(u,v)$ the inner product of two vectors $u,v$ in $H$ and by $\vert u\vert$ the $H$ norm of $u$. Let $V=D(A^{\frac12})$ endowed with the norm given by
$$\forall u\in V,\quad \Vert u\Vert=\vert A^{\frac12} u\vert .$$

 The topological dual of $H$ is identified with $H$, therefore
$$V\subset H=H'\subset V'$$ with continuous and dense imbeddings. 
 Let $C \in L(V, V') $   satisfy the following conditions 
 \begin{equation}\Vert C\Vert_{L(V, V')} < 1\end{equation}  
 We consider the second order evolution system
 \begin{equation}\label{SystOndeFortAbstrait}
\left\{ \begin{array}{ll}
u'' +u'+A u +C v=0, & \\[2mm]
 v''+A v+C ^*u=0. &  
\end{array} \right. 
\end{equation}
which can be rewritten as the first order system

\begin{equation}\label{SystAbstrait}
\left\{ \begin{array}{ll}
u'-w=0 & \\[2mm]
v'-z=0 & \\[2mm]
w'+Au+w+Cv=0 & \\[2mm]
z'+Av+C^*u=0. &  
\end{array} \right. 
\end{equation} 
We introduce $U:=(u,v,w,z)\in V\times V\times H\times H=:{\cH}$ and the space $\cH$ is endowed with the inner product $\langle \cdot,\cdot\rangle_{\cH}$ defined by
$$\langle (u,v,w,z),(\widehat{u},\widehat{v},\widehat{w},\widehat{z})\rangle _{\cH}=\langle Au,\widehat{u}\rangle+\langle Av,\widehat{v}\rangle+\langle w,\widehat{w}\rangle+\langle z,\widehat{z}\rangle+$$
$$+\langle Cu,\widehat{u}\rangle+\langle C\widehat{v},u\rangle.$$
 We define an unbounded operator $\cA$ on $\cH$ by the formulas
$$D(\cA)=\{(u,v,w,z)\in V^4,\ Au+Cv\in H,\ Av+C^* u\in H\}$$ 
and
$$\cA(u,v,w,z)=(-w, -z,Au+Cv+w,Av+C^*u),\quad \forall (u,v,w,z)\in D(\cA),$$ so that \eqref{SystAbstrait} is formally equivalent to
$$ U' + \cA U(t) = 0.$$
One has
\begin{eqnarray*}\langle \cA U,U\rangle_{\cH} &=& -\langle Aw,u\rangle-\langle Az,v\rangle+\langle Au+w+Cv,w\rangle+\\
&& \quad +\langle Av+C^*u,z\rangle-\langle Cz,u\rangle-\langle Cv,w\rangle\\
&=&\vert w\vert^2
\end{eqnarray*} 
Hence $\cA\geq 0$ on $D(\cA)$. Actually $\cA$ is maximal monotone. Indeed, to prove this, according to the general theory, cf e.g.\cite{{Lum-Ph}, Minty, OpMon, H0} and the references therein,  it suffices to prove that $\cA + I$ is onto. The system
$$\cA U+U=F=(f,g,\varphi,\psi)\in\cH $$
reduces to
 \begin{equation}\label{SystavantLM}
\left\{ \begin{array}{ll}
w=u-f & \\[2mm]
z=v-g & \\[2mm]
Au+Cv+2u=\varphi+2f \quad(\in H)& \\[2mm]
Av+C^*u+v=\psi+g. &  
\end{array} \right. 
\end{equation} 
We introduce the form
$$\Phi(u,v)=\frac12 (\vert A^{\frac12} u\vert^2+\vert A^{\frac12} v\vert^2)+\langle Cu,v\rangle.$$
The two last equations of \eqref{SystavantLM} reduce to
\begin{eqnarray*}D\Phi(u,v)+(2u,v)&=&(\varphi+2f,\psi+g)\\
&=&D\Phi_1(u,v)
\end{eqnarray*} 
where $\Phi_1(u,v)=\Phi(u,v)+\vert u\vert^2+\frac12\vert v\vert^2$, $D\Phi$ denotes the derivative of $\Phi$, $D\Phi\in\cL(V,V')$.\\
 $D\Phi$ is a symmetric operator as well as $D\Phi_1$, since $D\Phi$ is coercive, so is $D\Phi_1$. By Lax-Milgram theorem $$D\Phi_1(V)=V',$$ in particular $H\subset D\Phi_1(V)$ and this solves \eqref{SystavantLM}. Moreover, since $\varphi+2f$ and $\psi+g\in H$, we find $Au+Cv\in H, Av+C^*u\in H$, so that $U=(u,v,w,z)\in D(\cA)$. In particular, as a consequence of the general theory of semi-groups we find   \begin{proposition} For any $U_0 = (u_0,v_0,w_0,z_0)\in \cH$ there exists a unique solution $U \in C(\R^+, \cH) \cap C^1(\R^+, H\times H\times V'\times V')$ of with $U(0) = U_0$. Moreover , introducing
$$H_0(u,v,,w,z)=\frac12 (\Vert u\Vert^2+\Vert v\Vert^2+\vert w\vert^2+\vert z \vert^2)+\langle Cv, u \rangle.$$
  Then all solutions of the system \eqref{SystOndeFortAbstrait} are bounded and we have
  $$H_0(u(t),v,(t),w(t),z(t)=H_0(u(t),v,(t),u'(t),v'(t) \in C^1(\R^+) $$ with
 $$\frac{d}{dt}H_0(u(t),v(t),u'(t),w'(t)=-\vert u'\vert^2$$  Moreover if 
 $U_0 = (u_0,v_0,w_0,z_0)\in D(\cA)$, then $U \in C^1(\R^+, \cH)$, in particular $u, v$ are 
 in $ C^1(\R^+, V) \cap C^2(\R^+, H).$
\end{proposition}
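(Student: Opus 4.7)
The existence and uniqueness part is an immediate consequence of the Lumer--Phillips theorem applied to $\cA$, since the preceding paragraph has already verified that $\cA$ is maximal monotone on $\cH$. This furnishes a contraction semigroup $(S(t))_{t\geq 0}$ on $\cH$, and setting $U(t):=S(t)U_0$ gives the unique mild solution $U\in C(\R^+,\cH)$ for any $U_0\in\cH$. Since $\cA$ extends continuously to a bounded operator from $\cH$ into $H\times H\times V'\times V'$ (only the $V$-norms of $u,v$ are needed to control $Au+Cv$ and $Av+C^{*}u$ in $V'$), the relation $U'=-\cA U$ shows $U\in C^{1}(\R^+,H\times H\times V'\times V')$.

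For the energy identity I would first restrict to strong solutions $U_0\in D(\cA)$, for which the semigroup theory supplies $U\in C^{1}(\R^+,\cH)\cap C(\R^+,D(\cA))$ and all formal manipulations are justified. Differentiating $H_0$ along a solution, substituting $u'=w$, $v'=z$, $w'=-Au-w-Cv$ and $z'=-Av-C^{*}u$, and using the self-adjointness of $A$ together with the adjoint relation $\langle C^{*}u,z\rangle=\langle Cz,u\rangle$, every term should cancel except $-|w|^{2}=-|u'|^{2}$: the two cross terms $\langle Cz,u\rangle$ and $\langle Cv,w\rangle$ produced by differentiating $\langle Cv,u\rangle$ are designed to compensate exactly the coupling contributions coming from $(w,w')$ and $(z,z')$. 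Passing to a general $U_0\in\cH$ is then a density argument: for $U_0^{n}\in D(\cA)$ with $U_0^{n}\to U_0$ in $\cH$, the contraction estimate $\Vert S(t)U_0^{n}-S(t)U_0\Vert_{\cH}\leq\Vert U_0^{n}-U_0\Vert_{\cH}$ lets me pass to the limit in the integrated identity $H_0(U^{n}(t))-H_0(U_0^{n})=-\int_0^{t}|{u^{n}}'(s)|^{2}\,ds$. Since $u'=w\in C(\R^+,H)$ for the limit solution, the integrand $|u'(s)|^{2}$ is continuous, so the limiting identity yields $H_0(U(\cdot))\in C^{1}(\R^+)$ with derivative $-|u'|^{2}$.

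For the boundedness claim I invoke the hypothesis $\Vert C\Vert_{L(V,V')}<1$. The duality estimate
\[
|\langle Cv,u\rangle|\leq\Vert C\Vert_{L(V,V')}\Vert u\Vert\,\Vert v\Vert\leq\tfrac12\Vert C\Vert_{L(V,V')}(\Vert u\Vert^{2}+\Vert v\Vert^{2})
\]
shows that $H_0$ is equivalent to $\Vert\cdot\Vert_{\cH}^{2}$, with constants $\tfrac12(1\pm\Vert C\Vert_{L(V,V')})$ in front of the $V$-block. Combined with the monotonicity $H_0(U(t))\leq H_0(U_0)$ coming from the energy identity, this yields the uniform boundedness of $\Vert U(t)\Vert_{\cH}$.

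Finally, the additional regularity for $U_0\in D(\cA)$ follows from $U\in C(\R^+,D(\cA))$ in the graph norm: since the first two components of $\cA U$ equal $-w,-z$ and are measured in the $V$-norm inside $\Vert\cA U\Vert_{\cH}$, I obtain $w,z\in C(\R^+,V)$, hence $u,v\in C^{1}(\R^+,V)$; the third and fourth equations then give $u''=w'$ and $v''=z'$ in $C(\R^+,H)$, so $u,v\in C^{2}(\R^+,H)$. The only nontrivial step in the whole argument is the density passage in the second paragraph, and even that is essentially routine: the subtlety is just that $H_0$ mixes $V$-level quantities ($u,v$) with $H$-level quantities ($w,z$), but the contractivity of $S(t)$ on $\cH$ controls everything uniformly on compact time intervals and keeps the limit clean.
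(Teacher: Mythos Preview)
Your proposal is correct and follows exactly the approach the paper intends: the paper simply states this proposition ``as a consequence of the general theory of semi-groups'' once $\cA$ has been shown to be maximal monotone, and the energy identity you derive is nothing other than the computation $\langle \cA U,U\rangle_{\cH}=|w|^{2}$ already carried out in the preceding paragraph (since $H_0(U)=\tfrac12\Vert U\Vert_{\cH}^{2}$). You have merely spelled out the routine details (extension of $\cA$ to $\cH\to H\times H\times V'\times V'$, the density passage, and the equivalence of $H_0$ with the $\cH$-norm) that the paper leaves implicit.
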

 The main result of this section is the following
 \begin{theorem} \label {strong coup}  Assume that $C$ satisfies the following additional conditions :\begin{equation}\label{Hyp1-C^-1}\ker C = 0,\quad  H \subset C(V) \,\,\hbox{and} \,\,C^{-1}\in L(H, V)\end{equation} 
  \begin{equation}\label{Hyp2-C^-1} V' \subset C(H) \,\,\hbox{and} \,\,C^{-1}\in L(V', H)\end{equation} 
 \begin{equation}\label{Hyp3-C^-1} AC^{-1}- C^{-1}A \in L(H, H)\end{equation}  
 
\medskip\noindent meaning that the operator $ D = AC^{-1}- C^{-1}A \in L(V, V')$ is in fact bounded
 for the 
$H$-norm with values in $H$ and can therefore be extended on the whole of H as a bounded operator. 
Then for any $p>1$ fixed and  for all $\varepsilon>0$ small enough the quadratic form $H_{\varepsilon} = H_{\varepsilon}(u,v,w,z)$ defined by
\begin{equation}\label{Liap}H_{\varepsilon}=H_0-\varepsilon (v,z)+p\varepsilon(u,w)+\frac{(p+1)\varepsilon}{2}[\langle AC^{-1}w,v\rangle-\langle C^{-1}Au,z\rangle]\end{equation}
is a strict Liapunov functional. In particular the semi-group generated by 
\eqref{SystOndeFortAbstrait} is exponentially damped in $V\times V \times H \times H$. \end{theorem}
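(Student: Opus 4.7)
The strategy is to mimic the scalar proof of Proposition \ref{Liapscal}: differentiate $H_\varepsilon$ along a regular solution, identify the ``good'' negative terms, absorb the remaining cross-terms by Young's inequality once $\varepsilon>0$ is small enough, and conclude via the equivalence of $H_\varepsilon$ with the energy.

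I would first verify that $H_\varepsilon$ is equivalent to the squared energy $\|u\|^2+\|v\|^2+|w|^2+|z|^2$ for $\varepsilon$ small. The assumption $\|C\|_{L(V,V')}<1$ gives $H_0\ge \tfrac{1-\|C\|_{L(V,V')}}{2}(\|u\|^2+\|v\|^2)+\tfrac{1}{2}(|w|^2+|z|^2)$, and hypotheses \eqref{Hyp1-C^-1}--\eqref{Hyp2-C^-1} together with Cauchy--Schwarz yield $|\langle AC^{-1}w,v\rangle|\le \|C^{-1}\|_{L(H,V)}|w|\|v\|$ and $|\langle C^{-1}Au,z\rangle|\le \|C^{-1}\|_{L(V',H)}\|u\||z|$, so $H_\varepsilon-H_0$ is $O(\varepsilon)$ times the energy. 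Next, using $w'=-Au-w-Cv$ and $z'=-Av-C^*u$, one gets $\frac{d}{dt}H_0=-|w|^2$ (already stated), $\frac{d}{dt}(v,z)=|z|^2-\|v\|^2-\langle Cv,u\rangle$ and $\frac{d}{dt}(u,w)=|w|^2-\|u\|^2-(u,w)-\langle u,Cv\rangle$.

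The essential new computation is the derivative of the skew term; writing $D:=AC^{-1}-C^{-1}A$,
\begin{equation*}
\frac{d}{dt}\bigl[\langle AC^{-1}w,v\rangle-\langle C^{-1}Au,z\rangle\bigr]=-\langle AC^{-1}w,v\rangle+\langle Dw,z\rangle+\|u\|^2-\|v\|^2.
\end{equation*}
Two algebraic facts drive this identity. First, $AC^{-1}Cv=Av$ and $CC^{-1}Au=Au$ produce the $\|v\|^2$ and $\|u\|^2$ terms via $\langle Av,v\rangle$ and $\langle Au,u\rangle$. Second, self-adjointness of $A$ yields $\langle C^{-1}Au,Av\rangle=\langle AC^{-1}Au,v\rangle$, which cancels the otherwise problematic second-order-in-$A$ term $-\langle AC^{-1}Au,v\rangle$. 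The remainder collapses to the commutator $\langle Dw,z\rangle$, and this is exactly where hypothesis \eqref{Hyp3-C^-1} intervenes: it ensures $|\langle Dw,z\rangle|\le\|D\|_{L(H,H)}|w||z|$. In the scalar case $D=0$, which is why this term did not appear in the proof of Proposition \ref{Liapscal}.

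Collecting all contributions, the coefficients of $\|u\|^2$, $\|v\|^2$ and $\langle u,Cv\rangle$ combine exactly as in the scalar proof to $-\tfrac{(p-1)\varepsilon}{2}[\|u\|^2+\|v\|^2+2\langle u,Cv\rangle]\le -\tfrac{(p-1)\varepsilon(1-\|C\|_{L(V,V')})}{2}(\|u\|^2+\|v\|^2)$. Together with $-(1-p\varepsilon)|w|^2$ and $-\varepsilon|z|^2$ these form the ``good'' negative terms, while the leftover cross-terms $-p\varepsilon(u,w)$, $-\tfrac{(p+1)\varepsilon}{2}\langle AC^{-1}w,v\rangle$ and $\tfrac{(p+1)\varepsilon}{2}\langle Dw,z\rangle$ are bilinear mixed quantities controlled by Young's inequality once $\varepsilon$ is small enough. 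This yields $\frac{d}{dt}H_\varepsilon\le-\delta H_\varepsilon$ for some $\delta>0$ and hence exponential decay in $V\times V\times H\times H$. The main obstacle is the algebraic structure of the skew-term derivative: without the self-adjointness cancellation one would need $A$ and $C^{-1}$ to commute, and \eqref{Hyp3-C^-1} is the exact minimal weakening that keeps the leftover commutator controllable. Carrying out the identities for regular solutions in $D(\cA)$ (with density for general initial data) and tracking the dualities $V\hookrightarrow H\hookrightarrow V'$ is the technical but routine remainder.
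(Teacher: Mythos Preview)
Your proposal is correct and follows essentially the same approach as the paper: you compute the same derivative of the perturbation terms, obtain the identical skew-term identity $\frac{d}{dt}[\langle AC^{-1}w,v\rangle-\langle C^{-1}Au,z\rangle]=-\langle AC^{-1}w,v\rangle+\langle Dw,z\rangle+\|u\|^2-\|v\|^2$ via the self-adjointness cancellation, combine the quadratic terms into $-\tfrac{(p-1)\varepsilon}{2}[\|u\|^2+\|v\|^2+2\langle Cv,u\rangle]$, and absorb the three cross-terms by Young's inequality using \eqref{Hyp1-C^-1}--\eqref{Hyp3-C^-1}. Your write-up is in fact slightly more explicit than the paper's on the norm-equivalence step and on isolating the role of the self-adjointness of $A$ in the cancellation.
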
\begin{proof}  We start with the case of strong solutions with $U_0 = (u_0,v_0,w_0,z_0)\in D(\cA)$. In this case we have $$\frac{d}{dt} [-(v, v')+ p (u, u')] = p|u'|^2-|v|'^2 + (v, A v +C^*u) - p(u, u'+Au +Cv) $$
$$ = p|u'|^2-|v|'^2 - p(u, u') + ||v||^2 - p||u||^2 -(p-1)(Cv, u)$$ 
On the other hand for strong solutions, the functions $\langle AC^{-1}u'(t),v(t)\rangle $ and \\
$\langle C^{-1}Au,v'(t)\rangle $ belong to $C^1(\R^+)$ with $$ \frac{d}{dt} [\langle AC^{-1}u',v\rangle-\langle C^{-1}Au,v'\rangle] = \langle (AC^{-1}- C^{-1}A) u', v'\rangle + \langle AC^{-1}u'',v\rangle-\langle C^{-1}Au,v''\rangle]$$
$$ = \langle Du', v'\rangle + \langle -AC^{-1}u' -AC^{-1}Au - Av,v\rangle + \langle C^{-1}Au, Av + C^*u\rangle]$$ 

$$ = \langle Du', v'\rangle - \langle -AC^{-1}u' ,v\rangle - ||v^2|| + ||u||^2 $$ 

Then we find easily 
$$\frac{d}{dt} \{-(v,v')+p(u,u')+\frac{(p+1)}{2}[\langle AC^{-1}u',v\rangle-\langle C^{-1}Au,v'\rangle] \}= p|u'|^2-|v|'^2  -  p(u, u')  $$ $$ + \frac{(p+1)}{2} (\langle Du', v'\rangle -  \langle -AC^{-1}u' ,v\rangle ) - \frac{(p-1)}{2} [||u||^2 + ||v^2|| + (Cv, u)]. $$
The end of the proof is now rather staightforward. Since $D\in L(H)$ and $ AC^{-1} \in L(H, V')$   by using the Cauchy-Schwarz inequality in all terms involving $u'$ we can achieve, as in the ODE case, a choice of $\varepsilon$ independent of the initial data so that 
$$\frac{d}{dt} H_{\varepsilon}
\le - \frac{\varepsilon}{2} (|u'|^2+|v|'^2) - \frac{(p-1)}{4}{\varepsilon} (1- ||C|| _{V, V'} ) (||u||^2+||v||^2) $$ 
Moreover since the  RHS of the last equality is continuous for the topology of ${\cH}$, by interating on a small time interval and passing to the limit by density, it is easy to see that the modified energy $H_{\varepsilon}$  is in fact in $C^1(\R^+)$ even in the case of weak solutions, so that our final inequality is valid in general. The conclusion follows immediately.\end{proof}

   \begin{remark} \label{ConditionsC}{\rm  Let us comment briefly about the meaning of the conditions \eqref {Hyp1-C^-1}, \eqref{Hyp2-C^-1} and \eqref{Hyp3-C^-1}.} The two first conditions express the fact that not only $C$ is regular, but its inverse $ C^{-1} $ has a smoothing effet at least equal to the smoothing effect of $A^{-1/2}.$  The third condition is an extra boundedness condition on the commutator of $A$ and $C$  and is automatically satisfied in the two following cases: \medskip 
   
   1) $C^{-1} \in L(H, D(A))\cap L(D(A)', H)$ 
   
   2) $A $ and $C$ commute with each other. \medskip
   
   \noindent In particular in finite dimensions there is no other condition than invertibility of $C$, and if $A, C$ are two elliptic operators of the same order with the same boundary conditions on a bounded domain, no commutation condition will be required. Finally is may be useful to observe that if $C= cA^{\alpha}$ with $\alpha\ge 0$, the hypotheses will be satisfied if and only if $c\not = 0$ and $\alpha\ge 1/2.$ \end{remark}

\section{ Some examples of strongly coupled systems } This section is devoted to a short list of examples in which Theorem \ref{strong coup} gives exponential decay together with a method to evaluate the logarithmic decrement of the slowest decaying solutions by means of an explicit Liapunov function.  \subsection{Finite dimensional examples} In finite dimensions, there is no condition to add relying the operators $A$ and $C$. A special case is the complex scalar example 

\begin{equation} \label{ODE2}
\left\{ \begin{array}{ll}
u''+u'+\lambda u +icv=0& \\[2mm]
v''+\lambda v -icu =0 &  
\end{array} \right. 
 \end{equation} which can also be written in real form 
 \begin{equation} \label{ODE3}
\left\{ \begin{array}{ll}
u_1''+u_1'+\lambda u_1 - cv_2=0 & \\[2mm]
u_2''+u_2'+\lambda u_2 + cv_1=0 & \\[2mm]
v_1''+\lambda v_1 +cu_2 =0 &  \\[2mm]
v_2''+\lambda v_2 -cu_1 =0 & 
\end{array} \right. 
 \end{equation} and could therefore be treated as the combination of the two real systems
 \begin{equation*} 
\left\{ \begin{array}{ll}
u_1''+u_1'+\lambda u_1 - cv_2=0 & \\[2mm]
v_2''+\lambda v_2 -cu_1 =0 & 
\end{array} \right. \end{equation*} and 
\begin{equation*} 
\left\{ \begin{array}{ll}
u_2''+u_2'+\lambda u_2 + cv_1=0 & \\[2mm]
v_1''+\lambda v_1 +cu_2 =0 & \end{array} \right. 
 \end{equation*}
 
 For the more general system \begin{equation} \label{ODE2}
\left\{ \begin{array}{ll}
u''+u'+\lambda u +(c+id)v=0& \\[2mm]
v''+\lambda v + (c-id)u =0 &  
\end{array} \right. 
 \end{equation}  the Liapunov functions cannot be found so easily by the combination of two scalar systems and the general formula \eqref{Liap} becomes useful. We find 
 
 \begin{equation}
H_\varepsilon=\frac12\left[   u'^2+   v'^2+\lambda(u^2+v^2)\right]+ Re \{\zeta uv+\varepsilon [- vv'+ p uu'+ \frac{(p+1)\lambda}{2\zeta }(u'v-uv')]\}
\end{equation} The choice $p= 3$ leads to the slightly simpler formula  
\begin{equation}
H_\varepsilon=\frac12\left[   u'^2+   v'^2+\lambda(u^2+v^2)\right]+ Re \{\zeta uv+\varepsilon [- vv'+ 3 uu'+ \frac{2\lambda}{\zeta }(u'v-uv')]\}\end{equation} 

\subsection{The wave equation with strong (maximal) coupling} 

Let  $\Omega$ be a bounded open domain of $\R^N$  . Then for any $\gamma \in (0, 1)$, the system

\begin{equation} \label{SystOndeFort}
\left\{ \begin{array}{ll}
\partial_t^2 u-\Delta u+\partial_t u-\gamma\Delta v=0 & \\[2mm]
\partial_t^2 v-\Delta v-\gamma\Delta u=0 &  
\end{array}\right. \end{equation} 
with homogeneous Dirichlet boundary conditions generates an exponentially damped linear semi-group in $V\times V \times H \times H$ with $H= L^2(\Omega)$ 
and $V= H^1_0(\Omega).$ A Liapunov functional is given for $\varepsilon $ small enough by 
$$ 
H_\varepsilon=\frac12\int_{\Omega}\left[   |\partial_t u|^2+  |\partial_t v|^2+|\nabla u|^2+  |\nabla v|^2\right] dx + \gamma \int_{\Omega} \nabla u. \nabla v  dx $$ $$+\varepsilon \int_{\Omega}  ( 3 u\partial_t u -v\partial_t v) dx  + \frac{2\varepsilon} {\gamma}\int_{\Omega}  ( v\partial_t u -u\partial_t v) dx $$

\subsection{The plate equation with structural (minimal) coupling} 
Let  $\Omega$ be a bounded open domain of $\R^N$ with $C^2$ boundary . Then for any $\gamma \in (0, \lambda_1(\Omega))$, the system\begin{equation} \label{SystOndeFort}
\left\{ \begin{array}{ll}
\partial_t^2 u+\Delta^2 u+\partial_t u-\gamma\Delta v=0 & \\[2mm]
\partial_t^2 v +\Delta^2 v-\gamma\Delta u=0 &  
\end{array}\right.
\end{equation} with the  boundary conditions  $u = v= \Delta u = \Delta v = 0$ generates an exponentially damped linear semi-group in $W\times W\times H \times H$ with $H= L^2(\Omega)$ 
and $W= H^2\cap H^1_0(\Omega).$
 A Liapunov functional is given for $\varepsilon $ small enough by 
$$ H_\varepsilon=\frac12\int_{\Omega}\left[   |\partial_t u|^2+  |\partial_t v|^2+|\Delta u|^2+  |\Delta v|^2\right] dx + \gamma \int_{\Omega} \nabla u. \nabla v  dx $$ $$+\varepsilon \int_{\Omega}  ( 3 u\partial_t u -v\partial_t v) dx  + \frac{2\varepsilon} {\gamma}\int_{\Omega}  ( \Delta u\partial_t v -\Delta v\partial_t u) dx $$

\subsection{A string equation with structural (minimal) coupling} 

The system 
\begin{equation} \label{Ondeper}
\left\{ \begin{array}{ll}
\partial_t^2 u-\partial_x^2 u +\partial_t u+ \gamma \partial_x v=0 & \\[2mm]
\partial_t^2 v-\partial_x^2v  -\gamma\partial_x u=0 &  
\end{array}\right. \end{equation} 
on a interval $(0, l)$ generates an exponentially damped linear semi-group in $V\times V \times H \times H$ where  $H$ is the space of $L^2$  functions in $(0, l)$ with mean-value $0$ and $V$ is the space of  $H^1$, l-periodic functions with mean-value $0$ , whenever  $\gamma\not = 0$ with $|\gamma |$ sufficiently small. A Liapunov functional is given for $\varepsilon $ small enough by 
$$ 
H_\varepsilon=\frac12\int_{\Omega}\left[   |\partial_t u|^2+  |\partial_t v|^2+|u|^2+  |v|^2 +|\partial_x u|^2+  |\partial_x v|^2\right] dx + \gamma \int_{\Omega}  u . \partial_x  v  dx $$ $$+\varepsilon \int_{\Omega}  ( 3 u\partial_t u -v\partial_t v) dx  + \frac{2\varepsilon} {\gamma}\int_{\Omega}  ( \partial_x u .\partial_t v - \partial_x v.\partial_t u) dx $$ 

\subsection{The wave equation with strong non-commuting coupling} We give here an example illustrating Remark \ref{ConditionsC}. Let  $\Omega$ be a bounded open domain of $\R^N$ and let $(a, b)$ be two real-valued, measurable, essentially bounded potentials on $\Omega$ with $ \min \{ \min _{x\in \Omega} a(x),  \min _{x\in \Omega} b(x)\} + \lambda_1(\Omega)>0$. Then for any $\gamma >0 $ small enough, the system

\begin{equation} \label{SystOndeFort+ pot}
\left\{ \begin{array}{ll}
\partial_t^2 u -\Delta u + a(x) u+\partial_t u+\gamma(-\Delta v +b(x) v)=0 & \\[2mm]
\partial_t^2 v-\Delta v + a(x) v + \gamma(-\Delta u +b(x)) u=0 &  
\end{array}\right. \end{equation} 
with homogeneous Dirichlet boundary conditions generates an exponentially damped linear semi-group in $V\times V \times H \times H$ with $H= L^2(\Omega)$ 
and $V= H^1_0(\Omega).$ \\

\begin{remark} {\rm  In this theorem, the Laplacian may be replaced by any strongly elliptic self-adjoint operator of order two with smooth coefficients. Here we do not give the formula for the Liapunov functionals since they are a bit more complicated than in the previous examples, but of course the reader can write them easily by applying the general formula \eqref{Liap} with $ A= 
-\Delta  + a(x) I$ and $ C= 
\gamma(-\Delta  + b(x) I)$. The smallness condition on $\gamma$ will depend  on $a, b$. We leave the details to the potentially interested reader.} \end{remark}

\subsection{A plate equation with structural non-commuting coupling} We conclude this Section by a slightly more delicate  example.  Let  $\Omega$ be a bounded open domain of $\R^N$ with $C^2$ boundary and let $m\in L^\infty(\Omega)$ be a non-negative function . Then for any $\gamma \in (0, \lambda_1(\Omega))$, the system\begin{equation} \label{SystOndeFort}
\left\{ \begin{array}{ll}
\partial_t^2 u+\Delta^2 u +m(x)u+\partial_t u-\gamma\Delta v=0 & \\[2mm]
\partial_t^2 v +\Delta^2 v +m(x)v -\gamma\Delta u=0 &  
\end{array}\right.
\end{equation} with the  boundary conditions  $u = v= \Delta u = \Delta v = 0$ generates an exponentially damped linear semi-group in $W\times W\times H \times H$ with $H= L^2(\Omega)$ 
and $W= H^2\cap H^1_0(\Omega).$
Indeed, assuming that $W$ is endowed with the norm given by the formula $$ \Vert u\Vert^2 _W= \int _\Omega (|\Delta u|^2+ m(x)u^2) dx$$ it is easy to check that 
$$ \Vert \Delta u\Vert_{W'}  \le \frac {1}{\lambda_1(\Omega)}\Vert \Delta u\Vert_{H}\le \frac {1}{\lambda_1(\Omega)}\Vert u\Vert _W$$
Moreover, here $C^{-1} = (-\gamma \Delta)^{-1}$ and $A = \Delta^2  +m(x)I $ do not commute, but 
$$ C^{-1}A - AC^{-1}= C^{-1}{\cM}-{\cM}C^{-1} $$ where $\cM$ denotes the operator of multiplication  by $m(x)$ is not only bounded, but even compact as an operator from $H$ to itself. \\

\begin{remark} {\rm  It is even possible to consider the case $ C= \gamma(-\Delta + b(x)I) $ with $b\in C^1(\overline {\Omega})$ } and $b\ge c> - \lambda_1(\Omega) $, but the calculations are more difficult and are left as an exercise. \end{remark}

\section{ The infinite dimensional weakly coupled case } We now consider the system

\begin{equation}\label{SystOndeFaibleAbstrait}
\left\{ \begin{array}{ll}
u'' +u'+A u +c v=0 & \\[2mm]
 v''+ A v+c u=0 &  
\end{array} \right. 
\end{equation}
 and we introduce $$E(u,v,,w,z)=\frac12 (\Vert u\Vert^2+\Vert v\Vert^2+\vert w\vert^2+\vert z \vert^2)+c(u,v).$$
Then all (weak) solutions of the system \eqref{SystOndeFaibleAbstrait} are bounded with
 $$\frac{d}{dt}E(u,v,u',v')=-\vert u'\vert^2$$ 
and we have
 \begin{theorem}   Assume $c\not=0$ and $\vert c\vert < \lambda_1(A)$. Then for any   $p>1$ fixed such that 
 \begin{equation}\label{Hypsurp}\displaystyle \frac{p+1}{p-1}<\frac{\lambda_1}{\vert c\vert}
\end{equation}
and  all $\varepsilon>0$ small enough the quadratic form $H_\varepsilon $ defined by
$$H_\varepsilon(u,v,w,z)=E-\varepsilon\lambda_1 (v,z)_*+p\varepsilon(u,w)+\rho\varepsilon[(w,v)-(u,z)]$$ 
with $ \rho = \frac{(p+1)\lambda_1 }{2c}$  satisfies the inequality  
$$\frac{d}{dt}H_\varepsilon(u,v,u',v')\le - \gamma (p, \varepsilon) \frac12 (\vert u\vert^2+\vert v\vert^2+\Vert u'\Vert_*^2+\Vert v'\Vert_*^2)$$ valid for any weak solution of  \eqref{SystOndeFaibleAbstrait}.
\end{theorem}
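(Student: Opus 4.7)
My approach parallels Proposition~\ref{Liapscal} and Theorem~\ref{strong coup}, with the novelty that one correction term uses the $V'$-inner product (the subscript $*$, with $\|f\|_*^2=\langle A^{-1}f,f\rangle$), since $C=cI$ fails the smoothing hypotheses of Theorem~\ref{strong coup}. I would work first with strong solutions $(u_0,v_0,u_1,v_1)\in D(\cA)$, for which all differentiations below are classically justified, and pass to weak solutions at the end by density and continuity of both members in the natural topology.

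The first step is to differentiate each summand of $H_\varepsilon$ along a trajectory. Besides $\frac{d}{dt}E=-|u'|^2$, the equations of motion yield
\begin{align*}
\frac{d}{dt}(u,u') &= |u'|^2-(u,u')-\|u\|^2-c(u,v),\\
\frac{d}{dt}(v,v')_* &= \|v'\|_*^2-|v|^2-c(u,v)_*,\\
\frac{d}{dt}\bigl[(u',v)-(u,v')\bigr] &= -(u',v)+c(|u|^2-|v|^2),
\end{align*}
using the identity $(v,Av)_*=(A^{-1/2}v,A^{1/2}v)=|v|^2$ for the second line; the third is the scalar Wronskian identity of Section~2 carried over verbatim. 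Substituting these with the prescribed weights $-\varepsilon\lambda_1$, $p\varepsilon$, $\rho\varepsilon$ and using the crucial arithmetic identity $\rho c=(p+1)\lambda_1/2$, the sum produces four negative main terms $-(1-p\varepsilon)|u'|^2$, $-\varepsilon\lambda_1\|v'\|_*^2$, $-p\varepsilon\|u\|^2$, $-\tfrac{(p-1)\lambda_1\varepsilon}{2}|v|^2$, a surplus $+\tfrac{(p+1)\lambda_1\varepsilon}{2}|u|^2$, and four cross terms $-p\varepsilon(u,u')$, $-pc\varepsilon(u,v)$, $-\rho\varepsilon(u',v)$, $+\varepsilon\lambda_1c(u,v)_*$.

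The coerciveness $\|u\|^2\ge\lambda_1|u|^2$ absorbs the surplus via the splitting $-p\varepsilon\|u\|^2=-\tfrac{p+1}{2}\varepsilon\|u\|^2-\tfrac{p-1}{2}\varepsilon\|u\|^2\le-\tfrac{(p+1)\lambda_1\varepsilon}{2}|u|^2-\tfrac{p-1}{2}\varepsilon\|u\|^2$, leaving $-\tfrac{p-1}{2}\varepsilon\|u\|^2$. The delicate point is the treatment of the cross terms by Cauchy--Schwarz and Young. The two ``non-$u'$'' terms $-pc\varepsilon(u,v)$ and $+\varepsilon\lambda_1 c(u,v)_*$ contribute at \emph{leading} order in $\varepsilon$: the first at most $\tfrac{p|c|\varepsilon}{2}(|u|^2+|v|^2)$, the second at most $\tfrac{|c|\varepsilon}{2}(|u|^2+|v|^2)$ after the dual-norm bound $\|f\|_*^2\le|f|^2/\lambda_1$; together they produce $\tfrac{(p+1)|c|\varepsilon}{2}$ on $|v|^2$ and, after $|u|^2\le\|u\|^2/\lambda_1$, a matching $\tfrac{(p+1)|c|\varepsilon}{2\lambda_1}$ on $\|u\|^2$, which must be beaten by the main contributions $\tfrac{(p-1)\lambda_1\varepsilon}{2}|v|^2$ and $\tfrac{p-1}{2}\varepsilon\|u\|^2$; in both cases this is exactly the hypothesis $(p+1)|c|<(p-1)\lambda_1$. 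The remaining two cross terms $-p\varepsilon(u,u')$ and $-\rho\varepsilon(u',v)$ are handled with a \emph{constant} Young parameter (e.g.\ $\tfrac14|u'|^2+\mathrm{const}\cdot\varepsilon^2|u|^2$ and similarly for the other), which pushes their $|u|^2,|v|^2$ contributions to $O(\varepsilon^2)$ while producing a fixed $\tfrac12|u'|^2$ that is absorbed into $-(1-p\varepsilon)|u'|^2$ for $\varepsilon<\tfrac{1}{2p}$. Choosing $\varepsilon$ small enough then yields the stated differential inequality.

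The main obstacle is the precise accounting identifying $\tfrac{p+1}{p-1}<\tfrac{\lambda_1}{|c|}$ as the \emph{sharp} requirement: the ``$+1$'' in the numerator is the extra contribution of the dual-norm cross term $\varepsilon\lambda_1c(u,v)_*$ to the $|v|^2$ and $\|u\|^2$ coefficients, beyond the obvious $p|c|$ that the $H$-cross term $-pc\varepsilon(u,v)$ alone would give. Once the strong-solution inequality is in place, the extension to weak solutions is routine: one integrates on a small time interval and passes to the limit by density, using that both sides are continuous in the energy topology of $V\times V\times H\times H$.
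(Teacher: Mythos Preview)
Your proof is correct and follows essentially the same route as the paper: the same term-by-term differentiation, the same use of $\rho c=\tfrac{(p+1)\lambda_1}{2}$ together with the coercivity splitting $-p\varepsilon\|u\|^2\le -\tfrac{(p+1)\lambda_1}{2}\varepsilon|u|^2-\tfrac{p-1}{2}\varepsilon\|u\|^2$, and the same identification of $(p+1)|c|<(p-1)\lambda_1$ as the threshold produced by the two $(u,v)$-type cross terms. The only cosmetic difference is that the paper carries a free Young parameter $\alpha$ for the combined $\|u\|\,|v|$ term and then checks that the admissible range for $\alpha$ is nonempty, whereas your symmetric splitting amounts to the specific choice $\alpha=\sqrt{\lambda_1}$; your handling of $-p\varepsilon(u,u')$ and $-\rho\varepsilon(u',v)$ by a fixed fraction of $|u'|^2$ (pushing the residuals to $O(\varepsilon^2)$) is likewise a harmless variant of the paper's $\eta$-step.
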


 \begin{corollary} For any  solution $(u,v)$ of  \eqref{SystOndeFaibleAbstrait} we have for some constant $C>0$ 
 $$ \forall t>0,\quad \vert u(t)\vert^2+\vert v(t)\vert^2+\Vert u'(t)\Vert_*^2+\Vert v'(t)\Vert_*^2 \le C \frac{E_0} {t} $$ 
 with $$ E_0 = \Vert u(0)\Vert^2+\Vert v(0)\Vert^2+\vert u'(0)\vert^2+\vert v'(0) \vert^2.$$
\end{corollary}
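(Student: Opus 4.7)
The strategy is to combine the theorem's differential inequality, which controls (the integral of) a weak norm, with an auxiliary monotonicity property of the weak energy, and then extract the $1/t$ rate by a standard integral argument. Introduce the nonnegative quantities
$$\tilde E_0(t) := \tfrac12\bigl(\vert u\vert^2+\vert v\vert^2+\Vert u'\Vert_*^2+\Vert v'\Vert_*^2\bigr), \qquad \tilde E(t) := \tilde E_0(t) + c\langle A^{-1/2}u,A^{-1/2}v\rangle.$$
Using Cauchy--Schwarz and $A\geq\lambda_1 I$ one has $|c\langle A^{-1/2}u,A^{-1/2}v\rangle|\leq (\vert c\vert/\lambda_1)\tilde E_0$, so by the hypothesis $\vert c\vert<\lambda_1$ the quantities $\tilde E$ and $\tilde E_0$ are equivalent: $(1-\vert c\vert/\lambda_1)\tilde E_0 \leq \tilde E\leq (1+\vert c\vert/\lambda_1)\tilde E_0$.

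The first step is to show that $\tilde E$ is non-increasing along solutions. Since $A^{-1/2}$ commutes with $A$ and with the scalar $c$, the pair $(\phi,\psi):=(A^{-1/2}u,A^{-1/2}v)$ also solves \eqref{SystOndeFaibleAbstrait}, but now viewed in the weaker phase space $H\times H\times V'\times V'$ to which the semigroup extends by continuity. A direct check using $\Vert\phi\Vert=\vert u\vert$ and $\vert\phi'\vert=\Vert u'\Vert_*$ shows that $E(\phi,\psi,\phi',\psi')=\tilde E(t)$, so the standard identity $\tfrac{d}{dt}E(\phi,\psi,\phi',\psi')=-\vert\phi'\vert^2$ becomes
$$\frac{d}{dt}\tilde E(t) = -\Vert u'(t)\Vert_*^2 \leq 0,$$
first for strong solutions, then for weak ones by density. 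On the other hand the correction terms in $H_\varepsilon$ are each bounded by $C\varepsilon E$ (via Cauchy--Schwarz and $\vert\cdot\vert\leq\lambda_1^{-1/2}\Vert\cdot\Vert$), so for $\varepsilon$ small enough $H_\varepsilon$ is equivalent to the strong energy $E$, and in particular $0 \leq H_\varepsilon(t)\leq H_\varepsilon(0)\leq C_1 E_0$. Integrating the theorem's inequality from $0$ to $\infty$ yields
$$\gamma(p,\varepsilon)\int_0^{\infty}\tilde E_0(s)\,ds \leq H_\varepsilon(0) \leq C_1 E_0,$$
so by the equivalence $\tilde E_0\asymp\tilde E$ also $\int_0^{\infty}\tilde E(s)\,ds \leq C_2 E_0$.

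Combining the two facts, since $\tilde E$ is nonnegative and non-increasing,
$$t\,\tilde E(t)\leq \int_0^t\tilde E(s)\,ds \leq C_2 E_0,$$
which gives $\tilde E(t)\leq C_2 E_0/t$ and hence $\tilde E_0(t)\leq C_3 E_0/t$, proving the corollary. The main technical point is the rigorous justification of the monotonicity of $\tilde E$ for merely weak solutions; the cleanest route is to observe that the semigroup generator on $\cH$ intertwines via $A^{-1/2}$ with the same formal operator on the weaker phase space $H\times H\times V'\times V'$, and to read off the dissipative energy identity there, which expressed back in $(u,v)$ is precisely $\tfrac{d}{dt}\tilde E=-\Vert u'\Vert_*^2$.
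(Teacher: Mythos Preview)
Your proof is correct and follows essentially the same approach as the paper's: your $\tilde E$ and $2\tilde E_0$ coincide with the paper's $E_{-1}$ and $K$, and both arguments combine the theorem's inequality (integrated in time) with the monotonicity of the weak energy $E_{-1}$ to extract the $1/t$ rate via $t\,E_{-1}(t)\le\int_0^t E_{-1}(s)\,ds$. The only cosmetic difference is that you justify $\tfrac{d}{dt}\tilde E=-\Vert u'\Vert_*^2$ by the intertwining with $A^{-1/2}$, whereas the paper simply states this identity directly.
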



\begin{proof}  We prove the theorem and its corollary together. The quadratic form

$$E_{-1}(u,v,,w,z)=\frac12 (\vert u\vert^2+\vert v\vert^2+\Vert w\Vert_*^2+\Vert z\Vert_*^2)+c\langle u,v\rangle_*$$ is equivalent to $$K(u,v,,w,z)=\vert u\vert^2+\vert v\vert^2+\Vert w\Vert_*^2+\Vert z\Vert_*^2 $$  and non-increasing along trajectories. In fact we have
\begin{equation}\label{InegKE-1} \frac{\lambda_1-c}{2\lambda_1} K(u,v,,w,z)\leq E_{-1}(u,v,,w,z)\leq \frac{\lambda_1+c}{2\lambda_1}K(u,v,,w,z)  \end{equation}
and
$$\frac{d}{dt}E_{-1}(u,v,u',v')=-\Vert   u'\Vert^2_*.$$
On the other hand, we have, introducing ${\cal{H}}(t): = H_\varepsilon(u(t),v(t),u'(t),v'(t))$
$${\cal{H}}'(t) = : \frac{d}{dt}H_\varepsilon (u,v,u',v')=-(1-p\varepsilon)\vert   u'\vert^2-  \varepsilon\lambda_1\Vert   v'\Vert_*^2-\varepsilon\frac{p-1}{2}\Vert   u\Vert^2-\varepsilon\frac{p-1}{2}\lambda_1\vert v\vert^2$$
$$+p\varepsilon c \langle u,v\rangle-\frac{(p+1)\lambda_1\varepsilon}{2c}\langle u', v\rangle -\varepsilon\lambda_1 c\langle v,u\rangle_*$$

$$\leq -(1-p\varepsilon)\vert  u'\vert^2-  \varepsilon\lambda_1\Vert   v'\Vert_*^2-\varepsilon\frac{p-1}{2}\Vert   u\Vert^2-\varepsilon\frac{p-1}{2}\lambda_1\vert v\vert^2+p\varepsilon \vert c\vert \vert u\vert \vert v\vert $$ $$+\frac{(p+1)\lambda_1\varepsilon}{2c}\vert u'\vert\vert v\vert+\varepsilon\lambda_1 \vert c\vert\Vert v \Vert_*\Vert u\Vert_*$$
$$\leq -(1-p\varepsilon)\vert  u'\vert^2-  \varepsilon\lambda_1\Vert  v'\Vert_*^2-\varepsilon\frac{p-1}{2}\Vert  u\Vert^2-\varepsilon\frac{p-1}{2}\lambda_1\vert v\vert^2$$ $$+\frac{(p+1) \vert c\vert }{\sqrt\lambda_1} \varepsilon\Vert   u\Vert \vert v\vert +\frac{(p+1)\lambda_1}{2\vert c\vert}\varepsilon\vert   u'\vert\vert v\vert$$

$$\leq -(1-p\varepsilon)\vert  u'\vert^2-  \varepsilon\lambda_1\Vert  v'\Vert_*^2-\varepsilon\frac{p-1}{2}\Vert   u\Vert^2-\varepsilon\frac{p-1}{2}\lambda_1\vert v\vert^2$$ $$+\frac{(p+1) \vert c\vert}{2\sqrt\lambda_1} \varepsilon\left(\frac{1}{\alpha}\Vert   u\Vert^2+\alpha \vert v\vert^2\right) +\frac{(p+1)\lambda_1}{2\vert c\vert}\varepsilon\vert  u'\vert\vert v\vert $$
(By Young's inequality, with $\alpha>0$ to be choosen later)
$${\cal{H}}'(t) \leq -(1-p\varepsilon)\vert   u'\vert^2-  \varepsilon\lambda_1\Vert  v'\Vert_*^2+\frac{(p+1)\lambda_1}{2\vert c\vert}\varepsilon\vert  u'\vert\vert v\vert$$
$$-\varepsilon\underbrace{\left(\frac{p-1}{2}- \frac{(p+1) \vert c\vert}{2\alpha\sqrt\lambda_1}   \right)}_{\delta=}\Vert   u\Vert^2-\varepsilon\underbrace{\left(\frac{p-1}{2}\lambda_1-\frac{(p+1) \vert c\vert\alpha}{2\sqrt\lambda_1}\right)}_{\zeta=}\vert v\vert^2$$
We choose $\alpha$ such that $\delta$ and $\zeta$ be positive, which is equivalent to $$\displaystyle \alpha\in (\frac{p+1}{p-1}\frac{\vert c \vert}{\lambda_1^{\frac12}}, \frac{p-1}{p+1}\frac{\lambda_1^{\frac32}}{\vert c\vert})$$ (this is made possible by \eqref{Hypsurp}.) 
Now by using Young's inequality, we get
$$\frac{(p+1)\lambda_1}{2\vert c\vert}\varepsilon\vert u'\vert\vert v\vert\leq \frac{(p+1)\lambda_1}{4\vert c\vert}\varepsilon\left(\frac{\vert   u'\vert^2}{\eta}+\eta\Vert v\Vert^2\right),\quad \forall \eta>0.$$
We choose $\eta$ such that $\frac{(p+1)\lambda_1 \eta}{4\vert c\vert}=\frac{\zeta}{2}.$ Therefore
$${\cal{H}}'(t)\leq -(1-\varepsilon(p+\frac{(p+1)\lambda_1}{4 \eta\vert c\vert}))\vert   u'\vert^2-  \varepsilon\lambda_1\Vert  v'\Vert_*^2 -\varepsilon \delta\Vert u\Vert^2-\varepsilon \frac{\zeta}{2}\vert v\vert^2.$$
Now we choose $\varepsilon$ small enough such that $1-\varepsilon(p+\frac{(p+1)\lambda_1}{4 \eta\vert c\vert})>0$, and finally we find  a constant $\gamma = \gamma (p, \varepsilon) >0$ such that for all $t\ge 0$
\begin{equation}\label{InegH'K} \frac{d}{dt}H_\varepsilon(u,v,u',v')= {\cal{H}}'(t) \leq -\gamma K(u,v,u',v')  \end{equation} At this stage the theorem is proved. We now deduce the corollary.
From  \eqref{InegH'K}, assuming $\varepsilon $ possibly smaller in order to achieve positivity of the quadratic form $H$, we get
$$\int_0^tK(u(s),v(s),u'(s),v'(s))\,ds\leq\frac{1}{\gamma}H_\varepsilon(u(0),v(0),u'(0),v'(0)).$$
Using  inequality \eqref{InegKE-1}, we obtain
$$\frac{2\lambda_1}{\lambda_1+c} \int_0^t  E_{-1}(u(s),v(s),u'(s),v'(s))  \,ds\leq \frac{1}{\gamma}H_\varepsilon(u(0),v(0),u'(0),v'(0)).$$
Now since $E_{-1}$ is nonincreasing, it follows 
$$E_{-1}(u(t),v(t),u'(t),v'(t)) \leq  \frac{\lambda_1+c}{2\lambda_1\gamma }\frac{1}{t}H_\varepsilon(u(0),v(0),u'(0),v'(0)).$$
Using once again inequality \eqref{InegKE-1} we get
$$K(u(t),v(t),u'(t),v'(t)) \leq \frac{\lambda_1+c}{(\lambda_1-c)\gamma }\frac{1}{t}H_\varepsilon(u(0),v(0),u'(0),v'(0))\le C \frac{E_0} {t}.$$

\end{proof}

 \begin{remark} {\rm We recover here one of the main results of \cite{ACK02} by a Liapunov function approach. It seems that many indirect stabilization results can be proved by the same method. All the results involving different usual norms on both sides of the inequality can be deduced from the corollary by using $A-$ invariance, induction or interpolation. The theory will be complete as soon as optimality of the negative power of $t$ is established, and the comparison with similar simpler problems makes it look  reasonable. } \end{remark} 

\bibliographystyle{amsplain}

\providecommand{\bysame}{\leavevmode\hbox to3em{\hrulefill}\thinspace}
\providecommand{\MR}{\relax\ifhmode\unskip\space\fi MR }
\providecommand{\MRhref}[2]{%
  \href{http://www.ams.org/mathscinet-getitem?mr=#1}{#2}
}
\providecommand{\href}[2]{#2}
\begin{thebibliography}{}

\end{thebibliography}


\begin{thebibliography}{10}

\bibitem{ACK02} F. Alabau, P. Cannarsa and V. Komornik, 
Indirect internal stabilization of weakly coupled evolution equations.  
\emph{J. Evol. Equ. }\textbf{2} (2002),  127--150. 

\bibitem{OpMon}
H. Brezis, Op\'erateurs maximaux monotones et semi-groupes 
	de contractions dans les \\espaces de Hilbert.  \emph{North-Holland Mathematics Studies, No. 5. Notas de Matematica (50). North-Holland Publishing Co., Amsterdam-London; American Elsevier Publishing Co., Inc., New York}, 1973. vi+183 pp.
\bibitem{Ch-Russ} G. Chen,  D. Russell,  A mathematical model for linear elastic systems with structural damping. \emph{Quart.\
	Appl.\ Math.}\ \textbf{39} (1981/82), no.~4, 433--454.

\bibitem{H0} A. Haraux, Nonlinear evolution
	equations--global behavior of solutions.  \emph{Lecture Notes in
	Mathematics} \textbf{841}.  Springer-Verlag, Berlin-New York, 1981.


\bibitem{SD} A. Haraux, Syst\`{e}mes dynamiques dissipatifs et applications. 
\emph{Recherches en Math\'{e}matiques Appliqu\'{e}es} \textbf{17}, Masson, Paris (1991). xii+132 pp. ISBN: 2-225-82283-3 

\bibitem{HJ} A. Haraux, M.A. Jendoubi, The convergence problem for dissipative autonomous systems - classical methods and recent advances.   \emph{SpringerBriefs in 
	Mathematics} (2015), ISBN: 978-3-319-23406-9 (Print) 978-3-319-23407-6 (Online).

\bibitem{Liap} A.M. Liapunov, The General Problem of the Stability of Motion. \emph{Kharkov Mathematical Society}, Kharkov (1892).

\bibitem{Lum-Ph}  G. Lumer,  R.S. Phillips, Dissipative operators in a Banach space. \emph{Pacific J. Math.} \textbf{11} (1961), 679--698. 

\bibitem{Minty} G.J. Minty, Monotone (nonlinear) operators in Hilbert space.\emph{ Duke Math. J.} \textbf{29} (1962),  341--346.





\end{thebibliography}

\providecommand{\bysame}{\leavevmode\hbox to3em{\hrulefill}\thinspace}

\end{document}